\newtheorem{thr}{Theorem}[section]
\newtheorem{lem}[thr]{Lemma}
\newtheorem{conj}[thr]{Conjecture}
\theoremstyle{definition}
\newtheorem{defi}[thr]{Definition}
\DeclareMathOperator{\h}{h}
\DeclareMathOperator{\cm}{cm}
\DeclareMathOperator{\ccl}{ccl}
\title{Hadwiger's conjecture implies a conjecture of F{\"u}redi-Gy{\'a}rf{\'a}s-Simonyi}
\author{Stijn Cambie\footnote{Department of Mathematics, Radboud University Nijmegen, Postbus 9010, 6500 GL Nijmegen, The Netherlands. Email: \href{mailto:S.Cambie@math.ru.nl}{S.Cambie@math.ru.nl}. This work has been supported by a Vidi Grant of the Netherlands Organization for Scientific Research (NWO), grant number $639.032.614$.} }%
\date{}
\begin{document}

\maketitle

\begin{abstract}
	One of the most important open problems in the field of graph colouring or even graph theory is the conjecture of Hadwiger. This conjecture was the inspiration for many mathematical works, one of them being the work of F{\"u}redi, Gy{\'a}rf{\'a}s and Simonyi in which they ``risked'' to conjecture the precise bound for a graph with independence number $2$ to contain a certain connected matching. 
	We prove that their conjecture would be a corollary of Hadwiger's conjecture or equivalently if their risky conjecture would be false, then Hadwiger's conjecture would be false as well.
\end{abstract}

\section{Introduction}\label{sec:intro}

	We use the following standard notation for graphs and certain graph parameters.
	Let $\delta(G), \omega(G), \alpha(G)$ and $\chi(G)$ be the minimum degree, the clique number, the independence number and the chromatic number of a graph $G$ respectively.
	Let $\eta(G)$ be the largest value $t$ such that $G$ has a $K_t$ minor. 
	This parameter is called the Hadwiger number or contraction clique number of the graph $G$, representing the largest $t$ such that $K_t$ can be derived with edge deletions and edge contractions starting from $G.$ 
	In some other works it is also denoted with $\ccl(G)$ or $\h(G).$
	
	We say the disjoint subgraphs from $G$, $H$ and $H'$, are connected if there are vertices $u \in H$ and $v \in H'$ such that $uv$ is an edge from $G.$ We can extend this to a collection of subgraphs.
	
\begin{defi}[a connected collection of subgraphs]
	A collection of disjoint subgraphs of $G$ is (pairwise) connected if every two subgraphs of the collection are connected. 
\end{defi}

	The Hadwiger number is exactly equal to the maximum number of components/ subgraphs in a (pairwise) connected collection.
	An example of a connected collection is a connected matching, which is a collection of disjoint edges which are pairwise connected.
	Let $\cm(G)$ (or just $\cm$ when the underlying graph is clear from the context) be the size of a largest connected matching of $G$.

Hadwiger's conjecture~\cite{hadw} from $1943$ states that a graph $G$ having no $K_{t+1}$ minor, can be coloured with $t$ colours. Nowadays it is still one of the deepest and most famous unsolved problems in graph theory. By investigation of the random graph $G_{n, \frac 12}$, it is known that the conjecture is true for almost all graphs by~\cite{BCE80}. A survey on the conjecture can be found in~\cite{S16}.

\begin{conj}[Hadwiger, \cite{hadw}]\label{Hadw}
	For every graph $G$, we have $\eta(G) \ge \chi(G).$
\end{conj}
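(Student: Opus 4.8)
The plan is to attempt a proof by contradiction on a minimal counterexample: take $G$ with $\chi(G) = t+1$ and $\eta(G) \le t$, chosen with $|V(G)|$ minimum, and note that a standard argument lets me assume $G$ is $(t+1)$-vertex-critical, hence $\delta(G) \ge t$ and $G$ has no clique cutset. First I would dispose of the cases that are already understood. For $t \le 3$ the hypothesis $\eta(G) \le t$ forces $G$ to be, respectively, edgeless, a forest, or a graph of treewidth at most $2$, each of which is trivially $t$-colourable. The cases $t = 4$ and $t = 5$ are theorems of Wagner and of Robertson, Seymour and Thomas respectively, deduced from the Four Colour Theorem by describing $K_5$-minor-free (resp.\ $K_6$-minor-free) graphs as clique-sums of planar graphs and a few sporadic pieces. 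So the real content lies in the range $t \ge 6$, where, after a block decomposition, I may additionally assume $G$ is $2$-connected.

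The core of the attempted argument would be a structural dichotomy. Either $G$ admits a separator of bounded size across which it decomposes; then one colours the smaller pieces by minimality and glues the colourings after permuting colour classes on the separator — this is exactly the mechanism powering the reductions to the Four Colour Theorem for $t = 4,5$. Or $G$ is ``highly connected'', in which case one tries to build a $K_{t+1}$ minor directly from a well-chosen pairwise connected collection of connected subgraphs (which by the discussion above certifies $\eta(G) \ge t+1$), using the Graph Minor Structure Theorem of Robertson and Seymour to control what a $K_{t+1}$-minor-free graph can look like. As partial progress one can already bound $\chi(G)$ cheaply: a $K_{t+1}$-minor-free graph has at most $O(t\sqrt{\log t})\cdot n$ edges (Kostochka; Thomason), hence is $O(t\sqrt{\log t})$-degenerate and thus $O(t\sqrt{\log t})$-chromatic, and the more recent near-linear bounds of the shape $\chi(G) = O(t\log\log t)$ (Norin--Postle--Song; Delcourt--Postle) bring this close to, but not down to, the exact value $t$.

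The step I expect to be the main obstacle is precisely this last one: closing the remaining $\log\log t$ factor and, more seriously, the final multiplicative constant. The known structural descriptions of $K_{t+1}$-minor-free graphs are robust only up to constant factors and do not by themselves certify a $K_{t+1}$ minor inside a $(t+1)$-chromatic graph, while the extremal colour-critical examples that are nearly $K_{t+1}$-minor-free (clique-sums of disjoint dense gadgets) show that any proof must exploit the criticality of $G$ in an essential way that current minor-extraction techniques do not provide. For this reason I do not expect the plan above to go through with present methods: a genuine proof would seem to require either a new inequality directly tying $\chi$ to $\eta$ through the fractional- or list-colouring structure of critical graphs, or a strengthening of the minor structure theorem in the regime of linear average degree. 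In short, the honest status of this statement is that it is open; the contribution of the present paper is not a proof of it but the implication that, were it true, it would entail the Füredi--Gyárfás--Simonyi conjecture, and contrapositively that a counterexample to the latter would refute it.
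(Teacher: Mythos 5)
You have correctly identified that this statement is Hadwiger's conjecture itself, which the paper states as Conjecture~\ref{Hadw} without proof --- it is a famous open problem, and the paper's contribution is only the implication that it entails Conjecture~\ref{conjFGS}. Your survey of the known cases ($t\le 5$ via Wagner and Robertson--Seymour--Thomas), the degeneracy bounds of Kostochka and Thomason, and the obstacles to closing the gap is accurate, and your conclusion that the statement remains open is exactly the honest status; there is nothing in the paper to compare your sketch against.
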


Due to the basic relation that $\chi(G) \ge \frac{n}{\alpha(G)}$, a weaker conjecture would be that $\eta(G) \ge \frac{n}{\alpha(G)}.$ This weakening was first studied by Duchet and Meyniel~\cite{DM82}.
A particular interesting case is the one where $\alpha=2$. Due to the relation with induced $K_{1,2}$s, which are called seagulls, this case is called the Seagull problem. 

\begin{conj}[Seagull problem]\label{weakHadw}
	For every graph $G$ with $\alpha(G)=2$, we have $\eta(G) \ge \frac n2.$
\end{conj}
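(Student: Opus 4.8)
The plan is the following. The statement is an immediate consequence of Hadwiger's conjecture, which we are entitled to assume here: for every graph $G$ on $n$ vertices one has $\chi(G)\ge n/\alpha(G)$ by counting colour classes, so $\alpha(G)=2$ forces $\chi(G)\ge n/2$, and Conjecture~\ref{Hadw} then yields $\eta(G)\ge\chi(G)\ge n/2$, that is $\eta(G)\ge\lceil n/2\rceil$. So \emph{as a corollary of Hadwiger's conjecture} there is nothing more to do, and the only real question is how far one can push towards Conjecture~\ref{weakHadw} \emph{without} invoking Hadwiger; the remainder of this sketch describes the route I would take there.

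The approach is to build a $K_t$-minor by contracting a pairwise connected collection of branch sets, the simplest such collection being a connected matching: a connected matching of size $m$ contracts to a $K_m$-minor, so $\eta(G)\ge\cm(G)$ in general. I would proceed in four steps. First, note that when $\alpha(G)=2$ a maximum matching $M$ leaves at most two vertices uncovered, since the uncovered set is independent; hence $|M|\ge\lfloor n/2\rfloor$, the only deficient case being $n$ even, $|M|=n/2-1$, with two non-adjacent uncovered vertices. Second, upgrade $M$ to a \emph{connected} matching of (almost) the same size: two disjoint edges $uv,xy\in M$ fail to be connected exactly when $\{u,v,x,y\}$ induces a $2K_2$ in $G$ (equivalently a $C_4$ in the triangle-free complement $\overline{G}$), and I would remove or isolate such bad pairs by augmenting-type swaps of their endpoints. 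Third, contract the resulting connected matching of size $m$ to get a $K_m$-minor, so $\eta(G)\ge m$. Fourth, reincorporate the at most two uncovered vertices: by $\alpha(G)=2$ each is non-adjacent only to a clique, hence adjacent to all but a few branch sets, so each can be annexed to a suitable branch set or added as a fresh one after a brief local re-routing, lifting $m$ to $\lceil n/2\rceil$.

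The main obstacle is the interaction of the second and fourth steps: certifying global pairwise connectivity of essentially a perfect matching, and then extracting the last unit so as to reach exactly $\lceil n/2\rceil$ rather than $\lceil n/2\rceil-O(1)$. A maximum matching need not already consist of pairwise connected edges, and repairing one induced $2K_2$ by an endpoint exchange may create another somewhere else, so a genuinely global argument is needed — this is precisely the point at which Conjecture~\ref{weakHadw} has remained open, and the reason the present paper derives the F{\"u}redi--Gy{\'a}rf{\'a}s--Simonyi connected-matching statement from Hadwiger's conjecture rather than unconditionally. I would expect connected matchings of size $\lceil n/2\rceil-c$ for a small constant $c$ to fall out of the scheme above with modest effort, the hard residue being exactly that final additive slack.
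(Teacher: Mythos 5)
The statement in question is Conjecture~\ref{weakHadw}, which is an \emph{open conjecture}; the paper neither proves it nor claims to. The paper only records that it is the $\alpha=2$ case of the Duchet--Meyniel weakening $\eta(G)\ge n/\alpha(G)$ of Hadwiger's conjecture, and that (citing~\cite{PST03}) this weakening is in fact \emph{equivalent} to Hadwiger's conjecture when $\alpha=2$. So there is no proof in the paper to compare your attempt against, and you are right to say so.

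Your opening paragraph gives the correct conditional derivation: each colour class has size at most $\alpha(G)$, so $\chi(G)\ge n/\alpha(G)=n/2$, and then $\eta(G)\ge\chi(G)\ge n/2$ under Conjecture~\ref{Hadw}; this is exactly the observation the paper itself makes when introducing the conjecture (and, since $\eta$ is an integer, $\eta(G)\ge\lceil n/2\rceil$). The four-step sketch that follows is an honest description of an attempted unconditional proof via connected matchings, and you correctly name the point at which it fails: steps two and four, namely upgrading a maximum matching to a pairwise-connected one and then recovering the last unit of slack. One remark worth adding: because the Seagull problem is not merely implied by but \emph{equivalent} to Hadwiger's conjecture for $\alpha=2$ (a fact the paper notes and your sketch does not use), any unconditional completion of your connected-matching scheme would settle Hadwiger's conjecture in this case; this is a strong indication that local endpoint-swap repairs of induced $2K_2$s will not close the final gap. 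A small cosmetic point: the bound on the maximum matching is $\lvert M\rvert\ge\lfloor n/2\rfloor-1$ in general, with equality only when $n$ is even and two non-adjacent vertices are uncovered; your phrasing ``$\lvert M\rvert\ge\lfloor n/2\rfloor$, the only deficient case being\dots'' states the same thing awkwardly but is not an error.
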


As proven in e.g.~\cite[Thr. 3.1]{PST03}, this weaker problem is equivalent with Hadwiger's conjecture for $\alpha=2.$ 
This conjecture is proven in certain cases, e.g. when $G$ does not contain an induced $C_4$ or an induced $C_5$ by \cite[Thr. 5.2\&5.5]{PST03}
But in general, it is still unsolved and it might be a possible direction to disprove Hadwiger's conjecture.

It was observed in \cite{KPT, FGS} that to prove that $\eta(G) \ge c n$ for some $c> \frac 13$, it is necessary and sufficient to find a linear connected matching.
In the paper by F{\"u}redi, Gy{\'a}rf{\'a}s and Simonyi~\cite{FGS}, the authors ``risk the stronger conjecture" about the following precise version on the question of determining $\cm(G)$.

\begin{conj}[\cite{FGS}]\label{conjFGS}
	Let $G$ be a graph with independence number $\alpha =2$ and order $n=4t-1.$ Then $\cm \ge t.$
\end{conj}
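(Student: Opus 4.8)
The plan is to show that the Seagull problem (Conjecture~\ref{weakHadw}) already implies Conjecture~\ref{conjFGS}; since the Seagull problem follows from Hadwiger's conjecture (and, for $\alpha=2$, is equivalent to it by \cite{PST03}), this is enough. So I would fix a graph $G$ with $\alpha(G)=2$ and $n:=|V(G)|=4t-1$ and feed it into the Seagull bound $\eta(G)\ge n/2$; since $\eta(G)$ is an integer and $n$ is odd, this upgrades to $\eta(G)\ge\lceil n/2\rceil=2t$. (Equivalently one may use Hadwiger directly: $\eta(G)\ge\chi(G)\ge n/\alpha(G)=n/2$.) By the characterisation of the Hadwiger number recalled above, $G$ then carries a pairwise connected collection $\mathcal{C}=\{H_1,\dots,H_{2t}\}$ of connected, vertex-disjoint subgraphs; crucially I only need a collection of size exactly $2t$, not a maximum one. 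Everything after this is purely combinatorial: I want to ``round'' $\mathcal{C}$ down to a connected matching of size $t$.

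To prepare the rounding I would call $H_i$ a \emph{singleton}, an \emph{edge}, or \emph{large} according as $|V(H_i)|$ is $1$, $2$, or at least $3$, with $s$, $q$, $p$ members of each type, so $p+q+s=2t$. Since the members of $\mathcal{C}$ are pairwise connected, the $s$ singleton vertices are mutually adjacent, i.e.\ they form a clique $Q$, and each of them is adjacent to (a vertex of) every $H_\ell$. Counting vertices gives $s+2q+3p\le\sum_i|V(H_i)|\le n=4t-1$, which together with $p+q+s=2t$ simplifies to $q+2p\le 2t-1$, that is, $p\le s-1$; and trivially $p\le 2t-s$ as well.

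Now the rounding. Since $p<s$, pick an injection sending each large $H_i$ to a private singleton vertex $v_i\in Q$, and (as $\{v_i\}$ is connected to $H_i$) a vertex $w_i\in V(H_i)$ with $v_iw_i\in E(G)$. Let $M$ consist of the $q$ edges that are themselves members of $\mathcal{C}$, the $p$ edges $v_iw_i$, and a further $\lfloor(s-p)/2\rfloor$ edges obtained by pairing up the singleton vertices of $Q$ not used as any $v_i$. These edges are pairwise vertex-disjoint by construction, and I claim they form a \emph{connected} matching: given two of them, either both are original branch sets and are joined by an edge of $G$ because $\mathcal{C}$ is pairwise connected, or at least one of the two meets $Q$, and then they are joined because every vertex of $Q$ is adjacent to every branch set of $\mathcal{C}$ and to every other vertex of $Q$. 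Finally $|M|=q+p+\lfloor(s-p)/2\rfloor$: if $s\le t$ the first two terms already give $q+p=2t-s\ge t$, while if $s\ge t+1$ then $p\le 2t-s$ forces $s-p\ge 2(s-t)$, hence $\lfloor(s-p)/2\rfloor\ge s-t$ and $|M|\ge(2t-s)+(s-t)=t$. Either way $\cm(G)\ge t$.

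The only non-formal input is $\eta(G)\ge 2t$, so the ``implication'' is really: Hadwiger's conjecture (through the Seagull problem) supplies a $K_{2t}$-minor model, and such a model can always be converted into a connected matching of size $t$. I expect the delicate point — and the spot where a first attempt tends to break — to be the treatment of branch sets with at least three vertices: one \emph{cannot} in general shrink such a set to a single internal edge without destroying some of its connections, since a large branch set need not contain one edge adjacent to all the others. The remedy is to ``pay'' for each large branch set with one of the model's singleton vertices, which are precisely the universal connectors, and the inequality $p\le s-1$ — the one genuine computation — is exactly what guarantees there are always enough singletons to do so, with the value $4t-1$ being what makes the final count land on $t$ rather than $t-1$.
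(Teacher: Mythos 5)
Your proof is correct, and it takes a genuinely different route from the paper. Both arguments start by taking a $K_{2t}$-minor model and splitting its $2t$ branch sets into singletons, edges, and large parts, but from there they diverge. The paper argues by contraposition: it assumes $\cm \le t-1$, proves $\omega \le \cm$ (Lemma~\ref{omega<m}) and hence a minimum-degree bound $\delta \ge 3t-1$, then uses an exchange argument (Lemma~\ref{k1+k2<m}) to show any pairwise-connected family of singletons and edges has at most $\cm$ members, which together with $k_1+k_2+k_3 = 2t$ and $k_1+2k_2+3k_3 \le 4t-1$ yields $6t \le 6t-3$. You instead argue directly and constructively: from $p \le s-1$ you pair each large branch set with a private singleton (which is a universal connector), pair the leftover singletons among themselves, and keep the $q$ edge-branch-sets, producing an explicit connected matching of size $q+p+\lfloor(s-p)/2\rfloor \ge t$. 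What this buys you is notable: after the single invocation of the Seagull bound $\eta \ge 2t$, your argument never uses $\alpha=2$ again — it actually establishes the cleaner purely structural fact that any graph on at most $4t-1$ vertices with a $K_{2t}$-minor has a connected matching of size $t$, whereas the paper's Lemmas~\ref{omega<m} and~\ref{k1+k2<m} are tied to the $\alpha=2$ hypothesis via the degree bound. Your case split on $s\le t$ versus $s\ge t+1$ and the verification that $M$ is a connected matching (every edge of $M$ either is a branch set or meets the clique $Q$ of singletons) are both sound. The only small cosmetic remark is that the paper's approach, by isolating Lemma~\ref{k1+k2<m}, packages the ``no large branch sets needed'' insight as a reusable statement, while yours absorbs it into the final count; the mathematics is equivalent in effect but your version is shorter and self-contained.
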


The conjecture would be sharp due to the graph $G=K_{2t-1} \cup K_{2t-1}.$
The current best known lowerbound for $\cm$ in this setting is of the order $\Theta( n^{4/5} \log^{1/5} n )$ by \cite[Thr.3]{Fox10}.

Our main result is the observation that Conjecture~\ref{conjFGS} would be a corollary of Hadwiger's conjecture~\ref{Hadw}. So for those believing Hadwiger's conjecture is true, it is not a risky conjecture. On the other hand people who think Seagull's problem, Conjecture~\ref{weakHadw}, is false and hence also Hadwiger's conjecture, may try to prove this by disproving the Conjecture from~\cite{FGS}.

\begin{thr}\label{hadw=>F}
	If Conjecture~\ref{Hadw} is true, then so is Conjecture~\ref{conjFGS}. Equivalently, if Conjecture~\ref{conjFGS} is false, then so is Hadwiger's conjecture.
\end{thr}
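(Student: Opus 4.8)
The plan is to derive Conjecture~\ref{conjFGS} from Conjecture~\ref{Hadw}. Fix a graph $G$ with $\alpha(G)=2$ and $n=4t-1$. First I would note that Hadwiger's conjecture gives $\eta(G)\ge\chi(G)\ge n/\alpha(G)=(4t-1)/2$, and since $\eta(G)$ is an integer and $4t-1$ is odd, $\eta(G)\ge 2t$. (Only the weakening $\eta(G)\ge n/\alpha(G)$ — which for $\alpha=2$ is Conjecture~\ref{weakHadw} — is actually used.) By the characterisation of the Hadwiger number recalled in the introduction, $G$ thus admits a pairwise connected collection $H_1,\dots,H_{2t}$ of disjoint connected subgraphs, and the task reduces to extracting from it a connected matching of size $t$, using only that $\sum_i|V(H_i)|\le n=4t-1$.

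Second, I would set up a size count. Let $C$ be the union of those members $H_i$ that consist of a single vertex; since the collection is pairwise connected, $C$ induces a clique, and put $s=|C|$. Call a member \emph{slim} if it has exactly two vertices — such a member is simply an edge — and \emph{fat} if it has at least three; write $p$ for the number of fat members and $q$ for the number of slim ones, so $p+q=2t-s$. Counting vertices, $s+2q+3p\le\sum_i|V(H_i)|\le 4t-1$, which together with $p+q=2t-s$ forces $p\le s-1$; also $s+p=2t-q\le 2t$.

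Third, the construction. For each fat member $B$, choose an as-yet-unused vertex $c\in C$; since $\{c\}$ is connected to $B$, there is $b\in V(B)$ with $c\sim b$, and I keep the edge $cb$. As $p\le s-1<s$ there are enough vertices of $C$ for this, and these $p$ edges are disjoint and each meets $C$. Then keep each of the $q$ slim members as an edge, and finally pair the $s-p$ vertices of $C$ not yet used into $\lfloor(s-p)/2\rfloor$ edges inside the clique. All $p+q+\lfloor(s-p)/2\rfloor$ chosen edges are pairwise disjoint, and $p+q+\lfloor(s-p)/2\rfloor=(2t-s)+\lfloor(s-p)/2\rfloor\ge t$, because $\lfloor(s-p)/2\rfloor\ge s-t$ is equivalent to $s-p\ge 2(s-t)$, that is to $s+p\le 2t$. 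They are pairwise connected: each chosen edge is contained in $C$, or has an endpoint in $C$, or is a slim member; two edges that both meet the clique $C$ are connected; a slim member is connected to every other member of the collection; and a slim member $B$, read as the edge on its two vertices, is connected to an edge with an endpoint $c\in C$ because $\{c\}$ is connected to $B$, so $c$ is adjacent to one of the two vertices of $B$. This gives $\cm(G)\ge t$, i.e.\ Conjecture~\ref{conjFGS}, and the contrapositive is the theorem's second assertion.

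\textbf{Main obstacle.} The step I expect to be the genuine difficulty — and around which the argument is built — is that a fat member need not contain one edge dominating all its vertices, so one cannot just contract a large member down to an edge while keeping all its adjacencies to the other members. Matching each fat member against a private clique vertex of $C$ sidesteps this, and what makes it work is that enough clique vertices remain, which is exactly the inequality $p\le s-1$ and is where the precise hypothesis $n=4t-1$ enters. The only other point requiring care is the elementary equivalence $\lfloor(s-p)/2\rfloor\ge s-t\iff s+p\le 2t$, which makes the final count come out to exactly $t$ in the extremal situations (e.g.\ all members fat and $s=t+1$), matching the sharpness example $G=K_{2t-1}\cup K_{2t-1}$.
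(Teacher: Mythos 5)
Your proof is correct, and it takes a genuinely different route from the paper's. The paper argues by contraposition: it assumes a counterexample to Conjecture~\ref{conjFGS} (a graph $G$ with $\alpha=2$, $n=4t-1$, $\cm\le t-1$) and shows $\eta(G)<2t$. The engine is Lemma~\ref{k1+k2<m}: in such a graph, any pairwise connected collection of $k_1$ single vertices and $k_2$ edges has $k_1+k_2\le\cm\le t-1$. This lemma relies on the structural consequence of $\alpha=2$ — namely $\delta\ge n-1-\omega$ and $\omega\le\cm$ (Lemma~\ref{omega<m}) — to show that a singleton in the collection can always be grown to an edge, forcing $k_1+k_2\le\cm$. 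A short triple count $6t=3(k_1+k_2+k_3)\le(k_1+2k_2+3k_3)+2(k_1+k_2)\le(4t-1)+2(t-1)$ then contradicts the existence of a $K_{2t}$ minor.

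You instead prove the implication forward: assuming $\eta(G)\ge 2t$, you extract a connected matching of size $t$ directly from the branch sets, by pairing each ``fat'' branch set with a private singleton $c\in C$ (using $p\le s-1$, which follows from the vertex count $s+2q+3p\le 4t-1$ with $s+q+p=2t$), keeping the slim members as edges, and pairing the leftover singletons inside the clique $C$. I checked the arithmetic ($\lfloor(s-p)/2\rfloor\ge s-t\iff s+p\le 2t$, which holds since $s+p=2t-q$), the disjointness, and the pairwise connectivity of the resulting $p+q+\lfloor(s-p)/2\rfloor\ge t$ edges; all of it goes through. What your route buys, beyond being constructive: the extraction step is a standalone combinatorial lemma — a $K_{2t}$ minor in \emph{any} graph on $4t-1$ vertices forces a connected matching of size $t$ — and makes no use of $\alpha=2$ once Hadwiger has supplied the minor, whereas the paper's Lemma~\ref{k1+k2<m} is specific to the $\alpha=2$ setting. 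What the paper's route buys: it is phrased entirely as ``a counterexample to~\ref{conjFGS} is a counterexample to the Seagull problem,'' and the intermediate Lemmas~\ref{omega<m} and~\ref{k1+k2<m} are of some independent interest about $\alpha=2$ graphs with small $\cm$. Both are valid; yours is arguably the cleaner reduction.
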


More precisely we will prove that if Conjecture~\ref{conjFGS} is not true for a certain value of $t$, the Seagull's problem is false for $n=4t-1.$

The main observation in the proof for this is that in graph with $n=4t-1$, $\alpha=2$ and $\cm \le t-1$, a connected collection of vertices and edges contains at most $\cm$ components.
This is proven in Lemma~\ref{k1+k2<m}. 

\section{Proof of Theorem~\ref{hadw=>F}}

We start with the important observation that a graph with independence number $\alpha =2$ has large clique number or large minimum degree. To see this, note that for every vertex $v$, the complement of $N[v]$ has to be a clique. This implies that $n-(\delta +1) \le \omega.$
Using this observation, we now prove the following two lemmas saying that if any connected matching is reasonable small, so is the clique number and the number of components in a connected collection of vertices and edges.

\begin{lem}\label{omega<m}
	If a graph $G$ of order $n=4t-1$ satisfies $\alpha=2$ and $\cm\le t-1$, then $\omega \le \cm.$
\end{lem}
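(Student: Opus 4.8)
The plan is to produce a connected matching with $\omega$ edges by using a maximum clique as a universal hub. Fix a maximum clique $C$, set $U := V(G)\setminus C$, and let $B$ be the bipartite graph on parts $C,U$ whose edges are the edges of $G$ between the two parts; for $S\subseteq C$ write $N_B(S)\subseteq U$ for the set of vertices of $U$ having a neighbour in $S$. The whole point of this setup is that any family of pairwise disjoint edges of $G$ each meeting $C$ is automatically a connected matching, since the endpoints that lie in $C$ are mutually adjacent. Hence it suffices to find a matching of $B$ that saturates $C$: such a matching has $|C|=\omega$ edges and is connected.

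First I would record the one elementary estimate needed: pairing up the vertices of $C$ is already a connected matching of size $\lfloor\omega/2\rfloor$, so $\lfloor\omega/2\rfloor\le\cm\le t-1$ and hence $\omega\le 2t-1$. Next, suppose for contradiction that $B$ has no matching saturating $C$, and let $\nu<\omega$ be the maximum size of a matching of $B$. By the deficiency version of Hall's theorem there is a set $\emptyset\neq S\subseteq C$ with $|S|-|N_B(S)|=\omega-\nu\ge 1$; since $|S|\le\omega$ this gives $|N_B(S)|=|S|-(\omega-\nu)\le\nu$. Put $C':=U\setminus N_B(S)$. Every $v\in S$ has all of its $U$-neighbours inside $N_B(S)$, hence is adjacent to no vertex of $C'$; as $\alpha(G)=2$, two non-adjacent vertices of $C'$ would form an independent triple together with $v$, so $C'$ is a clique and therefore $|C'|\le\omega$. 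On the other hand, a maximum matching of $B$ covers $\nu$ vertices of $C$, and pairing up the remaining $\omega-\nu$ vertices of the clique $C$ extends it to a connected matching of size $\nu+\lfloor(\omega-\nu)/2\rfloor=\lfloor(\omega+\nu)/2\rfloor$; with $\cm\le t-1$ this forces $\omega+\nu\le 2t-1$. Combining, and using $|U|=n-\omega=4t-1-\omega$,
\[
\omega\ \ge\ |C'|\ =\ |U|-|N_B(S)|\ \ge\ (4t-1-\omega)-\nu\ \ge\ (4t-1-\omega)-(2t-1-\omega)\ =\ 2t,
\]
contradicting $\omega\le 2t-1$. Hence $B$ saturates $C$, which yields the desired connected matching of size $\omega$, so $\cm\ge\omega$.

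The one substantive idea — and the only place the hypothesis $\alpha(G)=2$ does any work beyond the opening observation already quoted in the paper — is that the vertices of $U$ avoided by the neighbourhood of a Hall obstruction $S$ form a clique, which is then size-limited by the maximality of $C$; the rest is bookkeeping. The place to be careful is routing the right quantities into the final chain of inequalities: one must notice that $|N_B(S)|\le\nu$, that $\nu$ is controlled by $\cm$ via the extend-by-pairing trick, and that the cap $\omega\le 2t-1$ needed to close the contradiction is itself free. I do not anticipate any case analysis or a serious obstacle beyond identifying the clique $C'$.
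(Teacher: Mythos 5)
Your proof is correct, and although it shares the same decomposition as the paper's --- fix a maximum clique $C$, look at the bipartite graph between $C$ and $U=V(G)\setminus C$, take a maximum matching $M$ there, observe that a $C$-saturating matching would finish, and exploit the ``pair up leftover clique vertices'' trick to extend $M$ to a connected matching --- the pivotal step is genuinely different. The paper uses the minimum-degree bound $\delta\ge n-1-\omega\ge 2t-1$ (forced by $\alpha=2$) to argue that a maximum $M$ must have $|C\cup V(M)|\ge 2t$; the extension trick then immediately produces a connected matching of size at least $t$, a contradiction. You instead use the extension trick in the opposite logical role, deriving the upper bound $\omega+\nu\le 2t-1$, and then obtain a separate contradiction via Hall's defect theorem: a deficiency set $S\subseteq C$ has $|N_B(S)|\le\nu$, and --- here is where $\alpha=2$ enters for you --- the set $C'=U\setminus N_B(S)$ is forced to be a clique, giving $|C'|\le\omega$, while the counting yields $|C'|\ge 2t$. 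Both proofs invoke $\alpha=2$ exactly once in a structural way, but differently: the paper converts it into a minimum-degree estimate, while you convert it into the observation that the part of $U$ missed by a Hall obstruction's neighbourhood is a clique. The paper's version is marginally more elementary (no Hall's theorem needed); yours makes the role of the Hall-type obstruction explicit, which is a fully correct and equally short alternative.
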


\begin{proof}
We will assume $\omega >\cm$ in this proof and derive a contradiction.
If $G$ is disconnected, then it is the union of two cliques, one having order at least $2t$ which would imply $\cm \ge t$.
Hence $G$ is connected.
Note that $\cm \ge \lfloor \frac {\omega}2 \rfloor$, so $\omega \le 2t-1$ and
$ \delta \ge n-1 - \omega\ge  2t-1.$
Take a clique $A$ of order $\omega$ and let $B=G \backslash A.$
Let $M$ be the largest matching in the bipartite $G[A,B]$.
If the size of $M$ is at least $\omega$, we are done.
Note that $A \cup M$ contain at least $2t$ vertices, since otherwise there was a vertex in $A \backslash M$ having a neighbour in $B \backslash M$ since $\delta >  \lvert A \cup M \rvert -1.$
But this implies that we can extend the matching $M$ in $A \cup M$ to a matching of size at least $t$ since $A$ is a clique, which is a contradiction again with $ \cm \le t-1$. 
\end{proof}

\begin{lem}\label{k1+k2<m}
Let $G$ be a graph of order $n=4t-1$ with $\alpha=2$ and $\cm\le t-1$ if there are $k_1$ vertices and $k_2$ edges which are pairwise connected, then $k_1+k_2 \le \cm.$
\end{lem}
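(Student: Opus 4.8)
The plan is to prove the statement by induction on $k_1$; equivalently, to show that a pairwise connected collection of $k_1$ single vertices and $k_2$ edges can always be converted, one vertex at a time, into a connected matching of size $k_1+k_2$, which forces $\cm\ge k_1+k_2$.

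First I would record the two bounds that drive everything. Since any two of the $k_1$ single-vertex members of the collection are connected and each consists of a single vertex, they must be adjacent in $G$; hence these $k_1$ vertices form a clique, so $k_1\le\omega$, and by Lemma~\ref{omega<m} we get $k_1\le\omega\le\cm\le t-1$. Likewise the $k_2$ edge members are disjoint and pairwise connected, i.e.\ they already form a connected matching, so $k_2\le\cm\le t-1$. Combining these with the observation preceding Lemma~\ref{omega<m} that $\alpha=2$ forces $n-(\delta+1)\le\omega$, together with Lemma~\ref{omega<m} itself, yields $\delta\ge n-1-\omega\ge(4t-1)-1-(t-1)=3t-1$.

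The base case $k_1=0$ is immediate: the collection is then a connected matching of size $k_2$, so $k_1+k_2=k_2\le\cm$. For the inductive step, assume $k_1\ge 1$ and let $S$ be the set of all vertices used by the collection, so $|S|=k_1+2k_2\le(t-1)+2(t-1)=3t-3$ by the bounds above. If some single-vertex member $v$ had all of its neighbours inside $S$, then $\delta\le\deg(v)\le|S|-1\le 3t-4<3t-1\le\delta$, a contradiction; hence some single-vertex member $v$ has a neighbour $w\notin S$. Replacing the member $\{v\}$ by the edge $vw$ gives a new collection with $k_1-1$ single vertices and $k_2+1$ edges. It is still pairwise disjoint because $w\notin S$, and still pairwise connected because $vw$ meets each remaining single-vertex member at $v$ (which is adjacent to it, the single vertices forming a clique) and each edge member at $v$ (which is adjacent to one of its endpoints), while all other pairs are unchanged. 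Applying the induction hypothesis to this new collection gives $(k_1-1)+(k_2+1)=k_1+k_2\le\cm$, as desired.

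I expect no serious obstacle here; the points needing care are (i) checking that the augmentation step preserves pairwise connectedness, which is exactly where the clique structure of the single vertices and the meaning of a vertex being connected to an edge are used, and (ii) making sure the inequality $|S|\le 3t-3$ genuinely holds, since this is what guarantees a neighbour of a single vertex outside $S$. Both halves of the hypothesis enter at this last point: $\alpha=2$ (via Lemma~\ref{omega<m}) controls $\omega$, hence $k_1$, and also $\delta$, while $\cm\le t-1$ controls $k_2$.
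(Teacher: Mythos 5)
Your proof is correct and is essentially the same argument as the paper's: both rest on the extension step that the minimum degree bound $\delta\ge n-1-\omega\ge 3t-1$ (via Lemma~\ref{omega<m}) exceeds the number of vertices used by the collection, so a single-vertex member can always be grown into a disjoint edge while preserving pairwise connectedness. The only difference is packaging — you run an induction on $k_1$ and bound $|S|\le 3t-3$ using $k_1\le\omega\le\cm$ and $k_2\le\cm$ separately, whereas the paper fixes a minimal counterexample with $k_2$ maximal and bounds $|S|\le k_1+2k_2\le 2\cm+1\le 2t-1$; both bounds are below $\delta$ and the argument goes through identically.
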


\begin{proof}
Assume the contrary and take a connected collection of $k_1$ vertices and $k_2$ edges with $k_1+k_2=\cm+1\le t$ and $k_2$ maximal among these. Note that $ \delta \ge n-1 - \omega\ge 3t-1$ (using $\omega \le t-1$ by Lemma~\ref{omega<m}), while the connected collection contains at most $k_1+2k_2\le 2\cm+1\le 2t-1$ vertices as $k_2 \le \cm$.
	But this implies that any vertex of the $k_1$ vertices has a neighbour different from the vertices belonging to the connected collection. So we can form an additional edge which is connected, i.e. $k_2$ was not taken maximal, contradiction. Hence $k_1+k_2\le \cm.$	
\end{proof}

Now we can prove our main result by applying Lemma~\ref{k1+k2<m}.

\begin{proof}[Proof of Theorem~\ref{hadw=>F}]
	Assume a graph $G$ of order $n=4t-1$ satisfies $\alpha=2$ and $\cm \le t-1$ (i.e. $G$ is a counterexample to Conjecture~\ref{conjFGS}), we will prove that $\eta(G)<2t,$ which would imply that Conjecture~\ref{weakHadw} is false and hence Hadwiger's conjecture is false as well.
	Assume the contrary, so there exists a $K_{2t}$-minor in $G$.
	Let $k_1$ be the number of vertices, $k_2$ be the number of edges and $k_3$ be the number of components of order at least $3$ which form the minor (connected collection).
	We know $k_1+k_2 \le t-1$ due to Lemma~\ref{k1+k2<m} and by definition that $k_1+k_2+k_3=2t$ and $k_1+2k_2+3k_3 \le 4t-1$.
	Hence we can derive a contradiction as follows 
	$$6t = 3(k_1+k_2+k_3) \le k_1+2k_2+3k_3 + 2(k_1+k_2) \le 4t-1 +2(t-1)=6t-3.$$
\end{proof}

\subsection*{Acknowledgement}

The author is grateful to Zoltan F{\"u}redi, Andras Gy{\'a}rf{\'a}s, Ross J. Kang and G{\'a}bor Simonyi for their remarks on earlier thoughts and presentation of the results.

\bibliographystyle{abbrv}%abbrv
\bibliography{imp_a2}

%%%%%%%%%%%%%%%%%%%%%%%%%%%%%%%%%%%%%%%%%%%%%%%%%%%%%%%%%%%%%%%%%%%%%%%%
\end{document}